\documentclass{amsart}
\usepackage[T1]{fontenc}
\usepackage{bm}
\usepackage{microtype} 

\usepackage[
  style=ext-alphabetic,
  backend=biber,
  doi=false,
  isbn=false,
  maxalphanames=99,
  maxnames=99,
  articlein=false
]{biblatex}

\DeclareFieldFormat{language}{}
\renewbibmacro*{language}{}

\AtEveryBibitem{%
  \clearfield{language}%
  \ifboolexpr{
    test {\ifentrytype{article}}
    or test {\ifentrytype{book}}
    or test {\ifentrytype{incollection}}
  }
    {\clearfield{url}}
    {}%
  \iffieldundef{eprint}
    {}
    {\clearfield{url}}%
}

\DefineBibliographyStrings{english}{page={}, pages={}}
\DefineBibliographyStrings{french}{page={}, pages={}}
\DefineBibliographyStrings{german}{page={}, pages={}}

\usepackage{tikz}
\usepackage{tikz-cd}
\usepackage{quiver}
\usepackage{adjustbox}
\usepackage{enumitem}
\usepackage[normalem]{ulem}
 \usepackage{stmaryrd}
 \usepackage{mathtools}

 \newcommand{\Rbar}{\overline{R}}

\renewcommand{\mathbb}{\mathbf}
\newcommand{\chibar}{\overline{\chi}}
\newcommand{\psibar}{\overline{\psi}}

\usepackage{amssymb,amsmath,amsfonts,amsthm,epsfig,amscd}
\usepackage[all,cmtip,poly]{xy}
\usepackage{color}

\newcommand{\red}{\operatorname{red}}

\newcommand{\univ}{{\operatorname{univ}}}

\usepackage{ifpdf}
\ifpdf \usepackage[bookmarksopen,bookmarksdepth=4]{hyperref} \fi

\usepackage{chngcntr}
\counterwithin{equation}{subsection}

\makeatletter
\let\oldsubsubsection\subsubsection

\renewcommand{\subsubsection}{\@ifstar{\subsubsectionstar}{\newsubsubsection}}

\newcommand{\subsubsectionstar}{\oldsubsubsection*}

\newcommand{\newsubsubsection}[1]{%
  \refstepcounter{equation}%
  \@startsection{subsubsection}{3}%
  {\z@}{.5\linespacing\@plus.7\linespacing}{-.5em}%
  {\normalfont\itshape}{#1}%
}
\makeatother

\newtheorem{thm}[equation]{Theorem}

\newtheorem{cor}[equation]{Corollary}

\newtheorem{prop}[equation]{Proposition}

\theoremstyle{definition}

\newtheorem{defn}[equation]{Definition}

\theoremstyle{remark}
\newtheorem{remark}[equation]{Remark}
\newtheorem{rem}[equation]{Remark}

\newif\iffinalrun
\iffinalrun
\else
\fi

\iffinalrun
  \newcommand{\need}[1]{}
  \newcommand{\mar}[1]{}
\else
  \newcommand{\need}[1]{{\tiny *** #1}}
  \newcommand{\mar}[1]{\marginpar{\raggedright\tiny  
		  fixme
      #1}}
\fi

\iffinalrun
  \newcommand{\finalmar}[1]{}
\else
  \newcommand{\finalmar}[1]{\marginpar{\raggedright\tiny   
      #1}}
\fi

\newcommand{\F}{\FF}

\newcommand{\Q}{\QQ}

\newcommand{\Z}{\ZZ}

\newcommand{\m}{\frakm}

\newcommand{\FF}{{\mathbb F}}

\newcommand{\QQ}{{\mathbb Q}}

\newcommand{\ZZ}{{\mathbb Z}}

\newcommand{\cO}{{\mathcal O}}

\newcommand{\cS}{{\mathcal S}}

\newcommand{\cX}{{\mathcal X}}

\newcommand{\frakm}{\mathfrak{m}}

\newcommand{\Fbar}{\overline{\F}}
\newcommand{\Qbar}{\overline{\Q}}
\newcommand{\Zbar}{\overline{\Z}}

\newcommand{\Fpbar}{\Fbar_p}

\newcommand{\Zpbar}{\Zbar_p}

\newcommand{\Qp}{\Q_p}

\newcommand{\Qpbar}{\Qbar_p}

\DeclareMathOperator{\Gal}{Gal}
\DeclareMathOperator{\GL}{GL}

\DeclareMathOperator{\PGL}{PGL}

\DeclareMathOperator{\Proj}{Proj}

\DeclareMathOperator{\Spec}{Spec}

\DeclareMathOperator{\Spf}{Spf}

\newcommand{\rhobar}{\overline{\rho}}

\newcommand{\into}{\hookrightarrow}
\newcommand{\onto}{\twoheadrightarrow}

\newcommand{\loc}{\mathrm{loc}}

\newcommand{\varepsilonbar}{\overline{\varepsilon}}
\newcommand{\rbar}{\overline{r}}

\usepackage{crossreftools}

\title[Prescribed lifts of 2-dimensional representations]{Prescribed lifts of 2-dimensional representations}

\author[M. Emerton]{Matthew Emerton}\email{emerton@math.uchicago.edu}
\address{Department of Mathematics, University of Chicago,
5734 S.\ University Ave., Chicago, IL 60637, USA}

\author[T. Gee]{Toby Gee} \email{toby.gee@imperial.ac.uk} \address{Department of
  Mathematics, Imperial College London,
  London SW7 2AZ, UK}

\author[L. Pan]{Lue Pan} \email{luepan@umich.edu} \address{Department of Mathematics, University of Michigan, 2074 East Hall
530 Church Street
Ann Arbor, MI 48109, USA}

\author[X. Zhu]{Xinwen Zhu}\email{zhuxw@stanford.edu}
\address{Department of Mathematics, Stanford University, USA}
\thanks{ ME was supported in part by the
  NSF grants 
  DMS-1902307, DMS-1952705, DMS-2201242 and~DMS-2502609. TG was 
  supported in part by an ERC Advanced grant. This project has received funding from the European Research Council
  (ERC) under the European Union’s Horizon 2020 research and
  innovation programme (grant agreement No. 884596). ME and TG were
  both supported in part by the Simons Collaboration on Perfection in Algebra, Geometry and Topology.
LP was supported in part by a Sloan Research Fellowship.
XZ was supported by NSF grant DMS-2200940.
For the purpose of open access, the authors have applied a CC BY public copyright licence to any author accepted manuscript arising from this submission.
 }

\begin{document}

\begin{abstract}Let~$F$ be a totally real field, and let~$p$ be prime.
Under standard Taylor--Wiles hypotheses, we show that an irreducible totally odd Galois representation $\rhobar:\Gal_F\to\GL_2(\Fpbar)$ admits lifts lying on arbitrary prescribed components of local deformation rings, allowing potentially semistable conditions with arbitrary regular Hodge--Tate weights.
\end{abstract}

\maketitle
\setcounter{tocdepth}{1}
\tableofcontents
\section{Introduction}\label{sec:introduction}
\refstepcounter{subsection}
Let $F$ be a totally real field, and let $\Sigma$ be a finite set of finite places of~$F$
containing all places above a fixed prime~$p$.
We write $\Gal_F\coloneq\Gal(\overline{F}/F)$ for the absolute Galois group of~$F$,
and we denote by $\Gal_{F,\Sigma}$ the Galois group of the maximal extension of~$F$ unramified at all finite
places outside~$\Sigma$.

We consider the following local-to-global lifting problem: let~$\cO$ be the
ring of integers in a finite extension of~$\Qp$, with residue field~$\F$, and let
\[
  \rhobar:\Gal_{F,\Sigma}\to\GL_2(\F)
\]
be a continuous representation.
Suppose that we are given a de Rham character~$\psi:\Gal_{F,\Sigma}\to\cO^\times$ lifting~$\det\rhobar$, and local conditions on lifts of~$\rhobar$ at the places in~$\Sigma$.
At places away from~$p$ these conditions are allowed to be any non-empty union of irreducible components of the appropriate fixed-determinant local deformation ring, while at places above~$p$ they are allowed to be any non-empty union of irreducible components of a potentially semistable deformation ring with prescribed regular Hodge--Tate weights.
We write~$\cS$ for this collection of data, and~$R_{\cS}$ for the corresponding global deformation ring; the precise definition is given in Definition~\ref{defn:global-deformation-problem}.

Our main theorem is the following.

\begin{thm}[Theorem~\ref{thm:main-thm-existence-lifts}]
\label{thm:main-intro}
  Suppose that~$\rhobar$ is totally odd, and that
  $\rhobar|_{\Gal_{F(\zeta_p)}}$ is absolutely irreducible.  If~$p=2$, assume that~$\rhobar$ has non-solvable image, and if ~$p=5$, assume  that~$\rhobar$ is not exceptional in the sense of Definition~\emph{\ref{defn:exceptional}}.
 
  Then, for every global deformation
  problem~$\cS$, the ring~$R_{\cS}$ is a finite~$\cO$-algebra of positive
  $\cO$-rank.  In particular, the set of characteristic-zero lifts
  of~$\rhobar$ to representations of~$\Gal_{F,\Sigma}$ of type~$\cS$ is finite and non-empty.
\end{thm}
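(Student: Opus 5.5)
The plan follows the strategy of Khare--Wintenberger and Kisin, combined with Gee's and Barnet-Lamb--Gee--Geraghty--Taylor's work on the Breuil--Mézard and weight-part technology, made unconditional for arbitrary components. There are two parts: finiteness of~$R_{\cS}$ over~$\cO$, and positivity of its $\cO$-rank.

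\textbf{Finiteness.} First I will use solvable base change and potential automorphy (Taylor, and the $\GL_2$ potential modularity theorems) to find a finite Galois totally real extension~$F'/F$, linearly disjoint from the fixed field of~$\rhobar$, over which~$\rhobar$ becomes modular. Over~$F'$ the local conditions can be arranged to be as nice as needed (e.g.\ trivial at~$\Sigma\setminus\{v\mid p\}$, potentially Barsotti--Tate or crystalline at~$p$). An $R=\mathbf{T}$ theorem over~$F'$, namely Kisin's modularity lifting in Taylor--Wiles--Kisin form with the ``$R^{\mathrm{red}}=\mathbf{T}$'' version that handles all components, shows that $R_{\cS|_{F'}}$ is finite over~$\cO$. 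Since $R_{\cS}$ is finite over $R_{\cS|_{F'}}$ (Khare--Wintenberger's argument), $R_{\cS}$ is finite over~$\cO$. The hypotheses on $\rhobar|_{\Gal_{F(\zeta_p)}}$, and the extra conditions when $p=2,5$, are exactly those needed for Taylor--Wiles primes to exist.

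\textbf{Dimension and positive rank.} By the Böckle/Kisin presentation of $R_{\cS}$ over the completed tensor product $R^{\loc}$ of the local rings, with the oddness of~$\rhobar$ used in the Euler characteristic computation, every irreducible component of~$R_{\cS}$ has Krull dimension at least $\dim R^{\loc}-[F:\Q]-(\text{correction})=1$. A ring finite over~$\cO$ of dimension at least~$1$ must have a $\Qpbar$-point, provided it is nonzero. The whole problem therefore reduces to showing $R_{\cS}\neq 0$, i.e.\ that~$\rhobar$ has at least one lift of type~$\cS$.

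\textbf{Nonemptiness, the main obstacle.} I would prove $R_{\cS}\neq0$ by patching. The Taylor--Wiles--Kisin patched module~$M_\infty$ over $R_\infty=R^{\loc}[[x_1,\dots,x_g]]$ is constructed for a deformation problem where automorphy of some lift is known. It has full support on the union of components of~$R^{\loc}$ that it meets. The key step is to show that the support of~$M_\infty$, for the relevant type and weight, contains the specified component of every local deformation ring. Two inputs go in here. Away from~$p$, I would use Ihara avoidance / level raising (Taylor's argument, and its refinement that all components of local lifting rings at $\ell\neq p$ are connected through ``modular'' points) to move between components. At~$p$, I would use the Breuil--Mézard conjecture for $\GL_2(\Q_p)$ and its patched version. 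For general~$F$ with arbitrary components, I would use the $p$-adic Langlands / patching functor approach: interpret $M_\infty$ as a patched module whose cycles realise the Breuil--Mézard cycles, show every component of the potentially semistable ring appears with positive multiplicity (via Serre weight results ensuring~$\rhobar$ is modular of some weight in each component's cycle), and then conclude using the positivity of Breuil--Mézard multiplicities. This step, showing that every component, not merely some, is in the support, is where I expect the real difficulty, particularly for semistable non-crystalline components and non-generic~$\rhobar$. Once it is done, $M_\infty\otimes_{R_\infty}R_{\cS}$-type quotients are nonzero, so $R_{\cS}\ne0$, and combined with the previous paragraphs the finite non-empty statement follows.
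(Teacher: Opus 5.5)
There is a genuine gap, and it lies in your finiteness step rather than where you locate the main obstacle. Restricting to $\Gal_{F'}$ does not change Hodge--Tate weights. Over $F'$ the conditions at places above $p$ are therefore still potentially semistable of the arbitrary regular weights $\lambda_v$: you can make them semistable, but not potentially Barsotti--Tate. Your route to finiteness of $R_{\cS|_{F'}}$ thus needs an $R^{\red}=\mathbf{T}$ (or ``$R$ finite over $\mathbf{T}$'') theorem in arbitrary regular weight and arbitrary potentially semistable type, over a totally real field in which $p$ need not split. No such theorem is available. In the Taylor--Wiles--Kisin method the patched module gives finiteness only once you know it is nonzero on every component of the relevant local rings, i.e.\ that $\rhobar$ has automorphic lifts on each prescribed component. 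That is precisely the weight-changing/level-raising statement being proved, and it would also give potential modularity of all the lifts, which the paper deliberately does not establish. Your third paragraph tries to supply this, but it rests on the Breuil--M\'ezard conjecture and a patching/$p$-adic Langlands framework for $\GL_2(F_v)$. These are known only for $F_v=\Qp$ (and, for Breuil--M\'ezard, for potentially Barsotti--Tate types), so for general $F$ your argument is conditional.

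That paragraph is also aimed at a non-issue. $R_\cS$ is a complete local ring with residue field $\F$ (the point $\rhobar$ lies on every $\Rbar_v^{\psi}$), so $R_\cS\neq 0$ automatically. Moreover $R_\cS[1/p]\neq 0$ follows at once from finiteness over $\cO$ together with $\dim R_\cS\ge 1$, since in a finite $\cO$-algebra every prime containing $\varpi$ is maximal. Everything therefore hinges on finiteness.

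The missing idea is to reduce finiteness to weight $2$ on special fibres, with no automorphy input in higher weight. Using the Emerton--Gee stack, every $\rbar:\Gal_{F_v}\to\GL_2(\F')$ with determinant $\psibar_1$ has a weight-$2$ lift that becomes semistable over $K_v=F_v(\varpi_v^{1/(q_v-1)})$; it suffices to check this on the dense set of reducible points. Hence $\cX^{[0,1]_v,K_v,\psi_1}/\varpi$ contains $\cX^{\psi_1}_{\red}$. Since the special fibre depends only on $\psibar=\psibar_1$, this in turn contains the reduced substack of any $\cX^{\lambda_v,E_v,\psi}$.

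Pulling back along versal maps, the surjection $R_v^{\psi}\onto R_v^{\lambda_v,E_v,\psi}/(\varpi,I_v)$ factors through $R_v^{[0,1]_v,K_v,\psi_1}/\varpi$, with $I_v$ nilpotent. Via $R_{\cS}=R_{\Sigma}^{\univ,\psi}\otimes_{R^{\loc}}R_{\cS}^{\loc}$ this gives a surjection $R_{\cS'}/\varpi\onto R_{\cS}/(\varpi,I)$ with $I$ nilpotent. Here $\cS'$ is the weight-$2$ problem with full local rings away from $p$. Snowden's theorem gives finiteness of $R_{\cS'}$, hence of $R_{\cS}$, and your dimension paragraph then finishes the proof exactly as in the paper.
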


In short, subject to the hypotheses of the theorem, arbitrary prescribed
non-empty sets of local components occur globally.

\begin{remark}
Although the theorem as we have phrased it (allowing our local conditions to
be unions of irreducible components of local deformation rings)
is equivalent to the analogous version in which we prescribe a single choice
of irreducible component at each prime, we find our phrasing convenient.
For example, imposing a bound on the prime-to-$p$ conductor of lifts of~$\rhobar$ amounts to considering a certain union of local components.  Thus the theorem immediately implies that the set of lifts of~$\rhobar$ satisfying any given bound on the prime-to-$p$ conductor (compatible with the prime-to-$p$ conductor of~$\rhobar$), as well as any prescribed $p$-adic Hodge-theoretic conditions at~$p$, is finite and non-empty.
\end{remark}

In particular, the theorem gives a Galois-theoretic analogue of theorems on level raising and lowering, and on change of weight, for Hilbert modular forms; see~\cite[\S 1.2]{MR4386822} for a historical overview of this connection, an explanation of the basic strategy for proving such results (which goes back to Khare--Wintenberger), and an account of the subsequent
generalisations to Hilbert modular forms (see for example
\cite{KW1,KW2,MR2459302,GeePrescribed}).
In brief: the global deformation ring with the
prescribed local conditions has a Galois-cohomological lower bound on its
dimension, which in the present setting says that
$\dim R_{\cS}\ge 1$; this is Proposition~\ref{prop:dim-lower-bound}.  Thus,
if one also
knows that~$R_{\cS}$ is finite over~$\cO$, this lower bound forces
$R_{\cS}$ to have positive~$\cO$-rank, so that $R_{\cS}\otimes_{\cO} \Qbar_p$
is a non-zero finite $\Qbar_p$-algebra.  The points of its $\Spec$
are exactly the desired lifts.

The crux of the argument is therefore to prove
the finiteness of the global deformation ring. 
In previous work,
this finiteness is ultimately
obtained from automorphy: after a suitable potential modularity argument, the
deformation ring is compared with a Hecke algebra, which is finite over~$\cO$.
For two-dimensional totally odd representations over totally real fields,
the required finiteness in ``parallel weight~$2$'' is already known.  Here this means that the labelled Hodge--Tate weights are $\{0,1\}$ at every embedding; the terminology comes from the fact that the Galois representations associated to Hilbert modular forms of parallel weight~$2$ have these Hodge--Tate weights.  In the form we
need, this is a theorem of Snowden~\cite{0905.4266} (building on Kisin's modularity lifting theorems~\cite{KisinModularity}), which is recalled below as
Proposition~\ref{prop:pot-BT-deformation-ring-finite}.

The main new point of this paper is that we can deduce the general case of arbitrary regular Hodge--Tate weights from this weight~$2$ case.
We do not use any new modularity lifting theorems to do this; indeed, we do not establish the potential modularity of our lifts.
Instead, we make use of the geometry of the
stack~$\cX$ of \'etale $(\varphi,\Gamma)$-modules~\cite{emertongeepicture}
(or, more precisely, its fixed-determinant variant).
A ``topological'' form of the Breuil--M\'ezard philosophy, established in~\cite{emertongeepicture}, states that
the underlying reduced substacks
of the relevant potentially semistable substacks
are equal to
suitable unions of components of the underlying reduced substack~$\cX_{\red}$.
Furthermore, each of these components already occurs
in the underlying reduced substack of a potentially semistable substack of parallel weight~$2$.
(Indeed,
Proposition~\ref{prop:underlying-reduced-tame-weight-0}
shows that every residual two-dimensional local representation with the required
determinant is contained in the special fibre of a
tamely potentially semistable weight~$2$ substack.)

Passing from stacks to local deformation rings via versal maps gives
Corollary~\ref{cor:nilpotent-thickening-comparison-local-deformation-rings}:
modulo~$\varpi$ and up to nilpotent thickening, any potentially
semistable local deformation ring is a quotient of a weight~$2$ local deformation ring.
Tensoring these local comparisons together and using the standard description of global
deformation rings as global-over-local tensor products then gives a surjection
\[
  R_{\cS'}/\varpi\onto R_{\cS}/(\varpi,I),
\]
where~$\cS'$ is a weight~$2$ deformation problem and~$I$ is nilpotent.  Since
$R_{\cS'}$ is finite over~$\cO$, it follows that~$R_{\cS}/\varpi$ is finite
over~$\F$, and hence that~$R_{\cS}$ is finite over~$\cO$.
As sketched above, the
Galois-cohomological lower bound then completes the proof of the theorem.

\begin{remark}
  In the particular case when $p$ splits completely in~$F$, it is well-known that Theorem~\ref{thm:main-intro} can be deduced from the Breuil--M\'ezard conjecture for~$\operatorname{GL}_2/\mathbf{Q}_p $, following the methods of Kisin's paper \cite{KisinFM}.
  In particular~\cite[Prop.~
  3.7]{MR2869026} proves a very closely related result (under a residual modularity hypothesis, which is easily removed via standard potential modularity arguments).
  
  After finishing writing this paper, we learned that a different proof (again in the case that~$p$ splits completely in~$F$, and under an easily removed residual modularity hypothesis) was given  in
~\cite[Prop.~8.4]{MR3968877}. The proof of this result in ~\cite{MR3968877}
is similar to ours; rather than using the 
general ``topological Breuil--M\'ezard'' statement that we establish using the stack~$\cX$, Hu and Paškūnas prove  
an analogous statement for deformation rings, in the framework of the geometric Breuil--M\'ezard conjecture of~\cite{emertongeerefinedBM} (which is known in the case that $p$ splits completely, {\em cf.}~the various citations
in the proof of~{\em loc.~cit.}).

It should also be noted that the idea of proving the finiteness of a global deformation ring via a comparison of the special fibres of local deformation rings also appears in  Allen and Calegari's paper \cite{MR3294389}, where they compare unramified and ordinary deformation rings.
\end{remark}

\begin{remark}
  The hypotheses in Theorem~\ref{thm:main-intro} are
those that are required to implement
the usual Taylor--Wiles argument in this two-dimensional setting.
  Some minor improvements are probably within reach.
In particular, 
it is possible that the exceptional case when $p=5$ could be handled (for example, using the methods of~\cite{MR3648503}), but we have not
 seriously investigated this. 

In an ideal formulation, one would replace the Taylor--Wiles condition by the precise cohomological condition that guarantees the expected lower bound on the global deformation ring, for instance the appropriate vanishing of the obstruction group or dual Selmer group.
(Without such a condition, the result need not hold; for example, see~\cite[\S 6.4]{Allen2019} for a discussion from the point of view of Galois deformations of an example due to Serre for~$F=\Q$ and~$p=3$, with~$\rhobar$ induced from $\Q(\sqrt{-3})$.)
We restrict throughout, though, to the residually irreducible case and,
indeed, to the Taylor--Wiles case;
 in forthcoming work, we will relate Theorem~\ref{thm:main-intro} to the local-to-global morphisms of moduli stacks of Galois representations discussed in~\cite[\S 9.2]{MR5007752}, and discuss the residually reducible case.
\end{remark}

\begin{rem}
  One could hope to generalise Theorem~\ref{thm:main-thm-existence-lifts}
to other number fields, and to representations valued in reductive groups other than~$\GL_2$.
  As alluded to in the previous remark, at least if~$\rhobar$ is irreducible, we expect that such generalisations should hold if and only if the Galois-cohomological lower bound on the dimension of the global deformation ring is positive (equivalently, is equal to~$1$).
  This requires the usual ``numerical coincidence'' in the Taylor--Wiles method to hold, as described at the beginning of the introduction to~\cite{cht}; in particular, it forces
the number field 
to be totally real, and the representation~$\rhobar$ to be totally odd, which in
turn forces
an appropriate self-duality to hold.
  The difficulty in actually proving results in these settings is the absence of sufficiently powerful automorphy lifting theorems.
(See Remark~\ref{rem:local-n-dimensional} below for a brief explanation of how our local results extend to arbitrary dimension.)

The main higher-dimensional setting in which automorphy lifting theorems are available is
that of 
automorphic forms on unitary groups.
The corresponding Galois representations are valued in the $L$-groups (or $C$-groups) of these unitary groups, which are of the form $\GL_n\rtimes\{\pm 1\}$, with~$-1$ acting by inverse-transpose; these Galois representations can be interpreted as essentially conjugate-self-dual representations of~$\Gal_{E}$, where~$E$ is a totally imaginary CM field.

The automorphy lifting theorems of~\cite{BLGGT} could be applied to extend our results
to this setting {\em if} one knew that (for example) all crystalline representations with Hodge--Tate weights $0,1,\dots,n-1$ are potentially diagonalizable.
This is known for~$n=2$ by~\cite{geekisin}, and for~$n=3$, it was recently proved (for~$p\ge 5$) in~\cite{bartlett2026resolutionsspacescrystallinerepresentations}.
It should then be straightforward to adapt the methods of this paper to prove the corresponding lifting results for~$n=2$ or~$3$, but we have not attempted to do so.
\end{rem}

\begin{remark}
\label{rem:Ramakrishna and FKP}
  There is another, purely Galois-theoretic approach to producing lifts with prescribed local properties at a fixed finite set of places, going back to Ramakrishna~\cite{MR1935843}.
  It requires one to allow additional ramification at auxiliary places to kill elements of Selmer groups, and so does not let one 
prescribe the local behaviour at every place at once.
  However, it is considerably more flexible, and Fakhruddin--Khare--Patrikis~\cite{MR4332672} have extended it so as to produce
$p$-adic lifts of mod-$p$ representations into quite general reductive groups
(provided, roughly, that $p$ is sufficiently large and that the ``numerical
coincidence'' mentioned in the previous paragraph holds).
  Their results in particular apply to essentially self-dual $n$-dimensional representations (for~$p$ sufficiently large, depending on the totally real field~$F$).
\end{remark}

\subsection{Acknowledgements}
We would like to thank Vytas Paškūnas and Jack Thorne for their helpful comments on an earlier version of this paper.
In particular, we would like to thank Paškūnas for pointing out the reference~\cite{MR3968877}, and for suggesting that we include the case $p=2$.

\section{Deformation rings}\label{sec:culjwvzdmn}
\subsection{Definitions}
\label{sec:definitions}
Let~$F$ be a totally real field.
Let~$\cO$ be the ring of integers in a finite extension~$E/\Qp$, with uniformizer~$\varpi$ and residue field~$\cO/\varpi=\F$.
We assume without further comment that~$E$ is chosen to be sufficiently large, so that for example the image of each embedding $F\into\Qpbar$ is contained in~$E$, and the irreducible components of the deformation rings considered below are geometrically irreducible.

Let $\rhobar:\Gal_F\to\GL_2 (\F)$ be absolutely irreducible and totally odd, in the sense that $\det\rhobar(c)=-1$ for each complex conjugation $c\in\Gal_{F}$.
Let $\psi: \Gal_F\to \cO^\times$ denote a character whose reduction modulo $\varpi$
--- which we denote by $\psibar$ --- is equal to~$\det \rhobar$,
and which is de Rham at all places lying over~$p$.
For each finite place~$v$ of~$F$, let~$R_v$ denote the universal lifting $\cO$-algebra for liftings of~$\rhobar|_{\Gal_{F_v}}$,
and let~$R_v^{\psi}$ denote the quotient of~$R_{v}$ corresponding to liftings of determinant~$\psi|_{\Gal_{F_{v}}}$.

\begin{defn}\label{defn:Hodge-type}
  If~$v|p$ is a place of~$F$, then a \emph{regular Hodge type} at~$v$
is a tuple $\lambda_v=\{\lambda_{\sigma,1},\lambda_{\sigma,2}\}_{\sigma:F_v\into\Qpbar}$, where $\lambda_{\sigma,1}>\lambda_{\sigma,2}$ are distinct integers. We say that~$\lambda_v$ has \emph{weight 2} if $\{\lambda_{\sigma,1},\lambda_{\sigma,2}\}=\{1,0\}$ for all~$\sigma$, in which case we write $[0,1]_{v}$ for~$\lambda_v$.
\end{defn}

Given a finite set~$\Sigma$ of finite places of~$F$, containing all places at which either
$\rhobar$ or~$\psi$ is ramified and all places lying above~$p$,
we let~$R_{\Sigma}^{\univ}$ denote the universal deformation $\cO$-algebra for deformations of~$\rhobar$ which are unramified at all finite places of~$F$ not contained in~$\Sigma$ (recall that if~$R$ is a complete local $\cO$-algebra with residue field~$\F$, a \emph{deformation} of~$\rhobar$ to~$R$ is an equivalence class of liftings under conjugation by $\ker\bigl(\GL_2 (R)\to\GL_2 (\F)\bigr)$).
We let~$R_{\Sigma}^{\univ,\psi}$ denote the quotient of~$R_{\Sigma}^{\univ}$ corresponding to deformations with determinant~$\psi$.

\begin{defn}\label{defn:global-deformation-problem}
  A \emph{global deformation problem} $\cS$ is a tuple \[\cS=\bigl(\psi,\Sigma,\{\Rbar_v^{\psi}\}_{v\in \Sigma}\bigr)\] consisting of the following data:
  \begin{itemize}[wide]
  \item As above, $\psi:\Gal_F\to\cO^\times$ is a de Rham character such that $\psibar = \det \rhobar$.
  \item $\Sigma$, as above, is a finite set of places of~$F$, containing all the places above~$p$, and all the places at which~$\rhobar$ or~$\psi$ is ramified.
  \item
For each~$v\nmid p$ with $v\in \Sigma$, we let $\Rbar_v^{\psi}$ be a quotient of~$R_v^{\psi}$ corresponding to a (non-empty) union of irreducible components of~$R_v^{\psi}[1/p]$.
  \item For each~$v|p$, we let~$\lambda_v$ be a regular Hodge--Tate type, we let~$E_v/F_v$ be a finite extension, and we write~$R^{\lambda_v,E_v,\psi}_{v}$ for the $\cO$-flat quotient of~$R_v^{\psi}$ corresponding to potentially semistable lifts of~$\rhobar|_{\Gal_{F_{v}}}$ which become semistable over~$E_{v}$.  We assume that~$R^{\lambda_v,E_v,\psi}_{v}$ is non-zero, and we let $\Rbar_v^{\psi}$ be a quotient of~$R^{\lambda_v,E_v,\psi}_{v}$ corresponding to a non-empty union of irreducible components
    of~$R^{\lambda_v,E_v,\psi}_{v}[1/p]$.
  \end{itemize}
\end{defn}
We let $R_{\cS}$ be the quotient of~$R_{\Sigma}^{\univ,\psi}$ given by the universal deformation $\cO$-algebra for deformations of~$\rhobar$ of determinant~$\psi$, which are unramified at all finite places of~$F$ not contained in~$\Sigma$, and whose restrictions to each~$\Gal_{F_v}$, $v\in \Sigma$ correspond to points of $\Rbar_v^{\psi}$.

Set $R^{\loc}\coloneq\widehat{\otimes}_{v\in \Sigma}R_v$, and $R^{\loc}_{\cS}\coloneq\widehat{\otimes}_{v\in \Sigma}\Rbar_v^{\psi}$, a quotient of~$R^{\loc}$.
If we choose a representative for the universal deformation of~$\rhobar$ to~$\GL_{2}(R_{\Sigma}^{\univ})$, we may regard~$R_{\Sigma}^{\univ}$ as an $R^{\loc}$-algebra, and $R_{\cS}$ as an $R^{\loc}_{\cS}$-algebra.
From the definitions we see that
\begin{equation}
  \label{eq:global-over-local}
  R_{\cS}=R_{\Sigma}^{\univ,\psi}\otimes_{R^{\loc}} R_{\cS}^{\loc}.
\end{equation}

The following dimension bound follows from a standard tangent-obstruction calculation using Galois cohomology, which goes back to Mazur~\cite{MR1012172}, and in this form is essentially due to Kisin~\cite{MR2459302}.

\begin{prop}\label{prop:dim-lower-bound}
Suppose that~$\rhobar|_{\Gal_{F(\zeta_p)}}$ is absolutely irreducible, and if~$p=2$, assume that $\rhobar$ has non-solvable image. Then $\dim R_{\cS}~\ge~1$.
  \end{prop}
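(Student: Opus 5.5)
The plan is to follow Kisin's presentation of the global deformation ring over the local one. Work with framed deformations: let $R^{\square}_{\Sigma}$ be the ring classifying lifts of $\rhobar$ (with determinant $\psi$, unramified outside $\Sigma$) together with a framing at each $v\in\Sigma$. It is formally smooth over $R^{\univ,\psi}_\Sigma$ of relative dimension $4|\Sigma|-1$; the $-1$ accounts for scalars, using that $\rhobar$ is absolutely irreducible so that its centralizer is the scalars. Put $R^{\square}_{\cS}=R^{\square}_\Sigma\otimes_{R^{\loc}}R^{\loc}_{\cS}$. Then
\[
\dim R_{\cS}=\dim R^{\square}_{\cS}-(4|\Sigma|-1).
\]
By Kisin's argument (\cite[Prop.~4.1.5]{MR2459302}), $R^{\square}_{\cS}$ is a quotient of a power series ring over the fixed-determinant local ring $R^{\loc,\psi}_{\cS}=\widehat\otimes_v\Rbar^\psi_v$ in $h^1$ variables, by $h^2$ relations. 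Here $h^i$ are the dimensions of the relative cohomology groups $H^i_{\Sigma}(\mathrm{ad}^0\rhobar)$ (with local conditions imposing nothing, i.e.\ framed), and one can arrange $h^2 \le \dim H^2(\Gal_{F,\Sigma},\mathrm{ad}^0\rhobar)$ modulo the local $H^2$ terms. Concretely:
\[
\dim R^{\square}_{\cS}\ \ge\ \dim R^{\loc,\psi}_{\cS} + h^1_{\Sigma} - h^2_{\Sigma}.
\]

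Next compute the local dimensions, which are standard. For $v\nmid p$, each component of $R^\psi_v[1/p]$ has dimension $3$, so $\dim \Rbar^\psi_v = 4$ since it is $\cO$-flat. For $v\mid p$, Kisin's theorem gives $\dim R^{\lambda_v,E_v,\psi}_v = 4+[F_v:\Qp]$, using that $\lambda_v$ is regular; with fixed determinant the $\mathrm{ad}^0$ count becomes $3+[F_v:\Qp]$, and adding one for $\cO$ gives the stated value. The completed tensor product over $\cO$ then has dimension
\[
1+\sum_{v\in\Sigma}3+\sum_{v\mid p}[F_v:\Qp] \;=\; 1+3|\Sigma|+[F:\Q].
\]

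Then apply the Greenberg--Wiles / Poitou--Tate Euler characteristic formula to the framed cohomology. It gives
\[
h^1_\Sigma-h^2_\Sigma \;=\; |\Sigma|-1-\sum_{v\mid\infty}\dim H^0(F_v,\mathrm{ad}^0\rhobar)+3[F:\Q] + (\text{dual Selmer term}).
\]
By total oddness each $H^0(F_v,\mathrm{ad}^0\rhobar)$ at an infinite place has dimension $1$ (for $p=2$ one must instead interpret oddness carefully, which is where the extra hypotheses enter). Hence the infinite places contribute $-[F:\Q]$, and the count is
\[
\dim R^{\square}_{\cS}\ \ge\ 1+3|\Sigma|+[F:\Q]+|\Sigma|-1+2[F:\Q]-[F:\Q]-\cdots,
\]
which is arranged, exactly as in Kisin, to give $\dim R^{\square}_\cS\ge 4|\Sigma|$ and hence $\dim R_\cS\ge 1$. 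Equivalently, the familiar count is $\dim R_\cS\ge 1+\sum_{v}(\text{local excess})-[F:\Q]+\sum_{v\mid p}[F_v:\Qp]=1$.

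The main obstacle is bounding the obstruction term $h^2$ by purely local data. This needs $H^2(\Gal_{F,\Sigma},\mathrm{ad}^0\rhobar)\to\bigoplus_{v\in\Sigma}H^2(F_v,\mathrm{ad}^0\rhobar)$ to be controlled, i.e.\ its kernel, which is dual to a Sha-group for $\mathrm{ad}^0\rhobar(1)$. Kisin's formula handles this without killing the kernel: the kernel's dimension enters both $h^1$ and $h^2$ and cancels in the presentation. What the hypotheses are really used for is the absolute irreducibility input, $H^0(\mathrm{ad}^0\rhobar)=0$, so that the framing count is correct. For $p=2$, $\mathrm{ad}^0$ is not a direct summand of $\mathrm{ad}$ and the archimedean $H^0$ and $H^2$ terms behave differently. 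There I would invoke the $p=2$ version of this dimension computation (as in Snowden's or Khare--Wintenberger's work), using the non-solvable image hypothesis to ensure the needed vanishing.
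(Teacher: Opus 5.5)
Your overall route is the one the paper takes. The paper simply cites Snowden's Prop.~6.2.1, i.e.\ Kisin's presentation of the framed global ring over $R^{\loc}_{\cS}$, together with Kisin's dimension formula for potentially semistable rings. You reproduce that outline: the framing count $4|\Sigma|-1$, $\dim R^{\loc}_{\cS}=1+3|\Sigma|+[F:\Q]$, and oddness at the real places.

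\textbf{The gap.} Your bookkeeping omits the one term where the hypotheses of the proposition actually enter. In the relative presentation the dual Selmer group does cancel between generators and relations, as you say. But a further contribution does not cancel: the cokernel of $H^2(\Gal_{F,\Sigma},\mathrm{ad}^0)\to\bigoplus_{v\in\Sigma}H^2(F_v,\mathrm{ad}^0)$, which by Poitou--Tate is dual to $H^0(\Gal_{F,\Sigma},(\mathrm{ad}^0)^*(1))$. For $p>2$ the correct relative count is
\[
h^1-h^2=|\Sigma|-1-\sum_{v\mid\infty}\dim H^0(F_v,\mathrm{ad}^0)-\dim H^0\bigl(\Gal_{F,\Sigma},(\mathrm{ad}^0)^*(1)\bigr),
\]
so the argument only yields $\dim R_{\cS}\ge 1-\dim H^0(\Gal_{F,\Sigma},(\mathrm{ad}^0)^*(1))$.

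Killing this group is exactly what the hypotheses are for:
\begin{itemize}
\item If $p>2$, then $(\mathrm{ad}^0)^*(1)\cong\mathrm{ad}^0(1)$. Its $\Gal_F$-invariants are trace-zero endomorphisms commuting with $\rhobar|_{\Gal_{F(\zeta_p)}}$, hence zero by Schur.
\item If $p=2$, the module is $(\mathrm{ad}^0)^*\cong\mathrm{ad}/\F$. The vanishing of its invariants is Khare--Wintenberger's Lemma~4.3, which uses non-solvable image.
\end{itemize}
That is precisely the check made in the paper's proof: it is the group $H^0(\Gal_{F,S},(\mathrm{ad}^0V)^*(1))$ appearing in Kisin's Prop.~4.1.5.

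\textbf{The stated role of the hypotheses is wrong.} You say they are used to get $H^0(\mathrm{ad}^0\rhobar)=0$ for the framing count. For $p>2$ that needs only absolute irreducibility of $\rhobar$, with nothing about $\zeta_p$. For $p=2$ it is false, since the scalars have trace zero. Nor is the non-solvability hypothesis about interpreting oddness at $p=2$. Taken literally, your argument would apply to Serre's example: $F=\Q$, $p=3$, $\rhobar$ induced from $\Q(\sqrt{-3})=\Q(\zeta_3)$. That $\rhobar$ is absolutely irreducible and odd but has $H^0(\Gal_{\Q},\mathrm{ad}^0(1))\neq 0$, and, as the introduction notes, the conclusion need not hold there.

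\textbf{The arithmetic does not close.} The $+3[F:\Q]$ in your Euler characteristic formula is spurious in the relative setting. The global term $3[F:\Q]$ is cancelled by the local Euler characteristics $\sum_{v\mid p}(h^0-h^1+h^2)(F_v,\mathrm{ad}^0)=-3[F:\Q]$, which are already built into $\dim R^{\loc}_{\cS}$. This double count is why your displayed sum comes to $4|\Sigma|+2[F:\Q]-\cdots$ and needs an unexplained correction. With the count above, and with the $H^0$ term killed, one gets $\dim R^{\square}_{\cS}\ge 1+3|\Sigma|+[F:\Q]+|\Sigma|-1-[F:\Q]=4|\Sigma|$, hence $\dim R_{\cS}\ge 1$.
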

  \begin{proof}
If~$p>2$ and~$\lambda_{v}$ has weight~$2$, this is~\cite[Prop.\ 6.2.1]{0905.4266}, and the general case follows from an identical argument, using~\cite[Thm.\ 3.3.4]{MR2373358} for the dimensions of the rings $\Rbar_{v}^{\psi}[1/p]$ for~$v|p$.
Note that when~$p=2$, the assumption that $\rhobar$ has non-solvable image guarantees the vanishing of the group $H^{0}(\operatorname{Gal} _{F,S},(\operatorname{ad}^0 V)^*(1))$ considered in~\cite[Prop.\ 4.1.5]{MR2459302}; see \cite[Lem.\ 4.3 2(ii)]{KW2}.
  \end{proof}
\begin{defn}\label{defn:lift-of-type-S}
  We say that a lift~$\rho:\Gal_{F}\to\GL_{2}(\Zpbar)$ of~$\rhobar\otimes_{\F}\Fpbar$ is of type~$\cS$ if it has determinant~$\psi$, and if for each place~$v\in \Sigma$, the corresponding homomorphism $R_v^{\psi}\to\Zpbar$ factors through $\Rbar_v^{\psi}$.
  By definition, the isomorphism classes of lifts of type~$\cS$ are in bijection with the set of $\cO$-algebra homomorphisms $R_{\cS}\to\Qpbar$.
\end{defn}

\section{Comparing the special fibres of potentially semistable deformation rings}
\label{sec:comp-spec-fibr}

\subsection{Moduli stacks of \'etale \texorpdfstring{$(\varphi,\Gamma)$}{(phi,Gamma)}-modules}

We now make use of the moduli stacks of \'etale \texorpdfstring{$(\varphi,\Gamma)$}{(phi,Gamma)}-modules constructed in~\cite{emertongeepicture}, or more precisely of the fixed-determinant variant of~\cite[App.\ C]{dotto2026categoricalpadiclanglandscorrespondence}.
Fix a place $v|p$ of~$F$, and let~$\psi:\Gal_{F_v}\to\cO^{\times}$ be a de Rham character.
We write~$\cX^{\psi}$ for the moduli stack of \'etale $(\varphi,\Gamma)$-modules defined in~\cite[Defn.\ C.1.1]{dotto2026categoricalpadiclanglandscorrespondence}, taking the field~$K$ to be~$F_{v}$ and the positive integer~$d$ to be~$2$; this is a formal algebraic stack over~$\Spf\cO$, with the property that the groupoid $\cX^{\psi}(\Spf\cO)$ is naturally equivalent to the groupoid of continuous representations $\Gal_{F_v}\to\GL_2 (\cO)$ with determinant~$\psi$.
The special fibre~$\cX^{\psi}/\varpi$ only depends on~$\psibar$, and if~$\F'/\F$ is a finite extension, $\cX^{\psi}(\Spec \F')$ corresponds to continuous representations $\Gal_{F_v}\to\GL_2 (\F')$ with determinant~$\psibar$.

For each regular Hodge type~$\lambda_v$ and finite extension~$E_v/F_v$, we write~$\cX^{\lambda_v,E_v,\psi}$ for the closed substack of~$\cX^{\psi}$ corresponding to potentially semistable representations which become semistable over~$E_{v}$.
By~\cite[Cor.\ C.1.4, Thm.\ C.1.8]{dotto2026categoricalpadiclanglandscorrespondence} (and the obvious variants for potentially semistable stacks, whose proofs are identical), the underlying reduced substack $\cX^{\psi}_{\red}$ of~$\cX^{\psi}$ is a finite type algebraic stack over~$\F$, and is equidimensional of dimension~$[F_v:\Qp]$.
Furthermore, the special fibre~$\cX^{\lambda_v,E_v,\psi}/\varpi$ of each~$\cX^{\lambda_v,E_v,\psi}$ is also a finite type algebraic stack over~$\F$, and is again equidimensional of dimension~$[F_v:\Qp]$.
In particular, the underlying reduced substack of~$\cX^{\lambda_v,E_v,\psi}$ is a union of irreducible components of~$\cX^{\psi}_{\red}$.

\begin{defn}\label{defn:pot-weight-2-notation}
  We let~$\varepsilon$ denote the $p$-adic cyclotomic character; our convention is that it has Hodge--Tate weight~$-1$.
Let~$\psi_1\coloneq \varepsilon^{-1}\widetilde{(\varepsilonbar\det\rhobar)}$, where the tilde denotes the Teichm\"uller lift; in local notation we also write~$\psi_1$ for its restriction to~$\Gal_{F_v}$. Write~$K_v\coloneq F_v(\varpi_v^{1/(q_v-1)})$, where~$\varpi_v$ is a uniformizer of~$F_v$ and~$q_v$ denotes the cardinality of the residue field of~$\cO_{F_v}$.
\end{defn}
By definition,~$\cX^{[0,1]_v,K_v,\psi_1}$ is the closed substack of~$\cX^{\psi_1}$ corresponding to representations which become semistable over~$K_v$ and are of weight~$2$.
The reason for our choice of~$K_v$ is the following proposition.

\begin{prop}
  \label{prop:underlying-reduced-tame-weight-0}
  The special fibre $\cX^{[0,1]_v,K_v,\psi_1}/\varpi$ contains~$\cX^{\psi_1}_{\red}$.
\end{prop}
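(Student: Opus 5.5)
Both $\cX^{[0,1]_v,K_v,\psi_1}/\varpi$ and $\cX^{\psi_1}_{\red}$ are equidimensional of dimension $[F_v:\Qp]$. Moreover, the underlying reduced substack of $\cX^{[0,1]_v,K_v,\psi_1}$ is a union of irreducible components of $\cX^{\psi_1}_{\red}$. So it suffices to show that every irreducible component of $\cX^{\psi_1}_{\red}$ meets the weight $2$ tamely potentially semistable locus in a dense set. Since that locus is closed, it is enough to show that a dense open subset of each component consists of $\Fpbar$-points lying in $\cX^{[0,1]_v,K_v,\psi_1}$. Equivalently, for $\rhobar_v$ generic on a component, we must exhibit a lift $\Gal_{F_v}\to\GL_2(\Zpbar)$ of $\rhobar_v$ with determinant $\psi_1$, Hodge--Tate weights $\{0,1\}$ at every embedding, which becomes semistable over $K_v$.

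First I would recall the description of the components of $\cX_{\red}$ from \cite{emertongeepicture} (adapted to fixed determinant). For $d=2$ they are indexed by Serre weights. The generic $\Fpbar$-point of the component indexed by a Serre weight is a non-split extension of characters $\chi_1$ by $\chi_2$. The restrictions $\chi_i|_{I_{F_v}}$ are products of fundamental characters of niveau one, so they factor through the tame inertia of $F_v$ of order $q_v-1$, with exponents determined by the weight. There may be a few non-generic exceptional components, e.g.\ those tied to the cyclotomic character or to the ``très ramifiée'' situation; I would treat these separately.

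Next, for such a generic point, I would lift $\chi_1,\chi_2$ to characters $\widetilde\chi_1,\widetilde\chi_2$ of the form (Teichmüller lift of a tame character) times (unramified character) times $\varepsilon^{-a}$ with $a\in\{0,1\}$. The characters are chosen so that their product is $\psi_1=\varepsilon^{-1}\widetilde{(\varepsilonbar\det\rhobar)}$, with one of them carrying the factor $\varepsilon^{-1}$. Such characters become crystalline over $K_v$, because $K_v/F_v$ is totally tamely ramified of degree $q_v-1$ and kills the tame inertia characters of niveau one. I would then lift the extension class: the relevant $H^1(\Gal_{F_v},\widetilde\chi_2\widetilde\chi_1^{-1})$ contains the potentially semistable (indeed potentially crystalline, or semistable if $\widetilde\chi_2\widetilde\chi_1^{-1}=\varepsilon^{-1}$) classes $H^1_g$. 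Using the standard Bloch--Kato dimension count together with the fact that $H^2$ of the integral lattice is torsion-free generically, one shows that reduction from $H^1_g$ (integral) to $H^1(\Gal_{F_v},\chi_2\chi_1^{-1})$ is surjective onto the space of extensions realised on that component, for generic choices. This is the weight-$2$ analogue of the usual ``every reducible $\rhobar$ has a potentially Barsotti--Tate lift'' argument as in \cite{geekisin}.

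The main obstacle I expect is the dimension matching for $H^1_g$: we need the reductions of the lifted extensions to fill out the whole component, which is of dimension $[F_v:\Qp]$. For Hodge--Tate weights $\{0,1\}$, $H^1_f$ of $\widetilde\chi_2\widetilde\chi_1^{-1}$ has dimension $[F_v:\Qp]$ when the weight difference is $1$ in the right direction. The plan is to arrange the choice of which $\widetilde\chi_i$ carries $\varepsilon^{-1}$ so that this holds, and to vary the unramified twists to cover the torus directions. Alternatively, and more cleanly, one could argue directly with stacks: the weight-$2$, tame-type substacks over all tame types $K_v$ sees are known (via \cite{emertongeepicture} and the Breuil--Mézard philosophy in weight $2$) to have special fibres whose cycles are the Serre weights $\sigma$ with nonzero multiplicity in the reduction of the corresponding inertial types. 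Every Serre weight for $\GL_2(k_v)$ occurs as a Jordan--Hölder factor of the reduction of some tame principal series or cuspidal type, all of which become semistable over $K_v$ (cuspidal types require the unramified quadratic extension, which is harmless for semistability with the given $K_v$ only up to Galois descent—so I would verify principal series plus Steinberg twists suffice). Since every Serre weight appears in the reduction of some principal series or twist of Steinberg, every component is covered, which gives the containment.
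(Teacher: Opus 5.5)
\textbf{There is a genuine gap.} Your first route has the same shape as the paper's argument: reducible points are dense, you lift the two characters as Teichm\"uller lifts times unramified characters with one factor $\varepsilon^{-1}$, and then you lift the extension class. But the case you postpone (``I would treat these separately'') is the only one that needs an idea. Moreover, the mechanism you propose for it (``$H^2$ of the integral lattice is torsion-free generically'', ``for generic choices'') fails there.

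Write $\rbar$ as an extension of $\chibar_2\varepsilonbar^{-1}$ by $\chibar_1$. Seek a lift with sub $\chi_1$ and quotient $\chi_2\varepsilon^{-1}$, so that the class lies in $H^1(\Gal_{F_v},\cO(\chi_1\chi_2^{-1}\varepsilon))$. The obstruction to lifting a residual class is the $\varpi$-torsion of $H^2(\Gal_{F_v},\cO(\chi_1\chi_2^{-1}\varepsilon))$, which is Pontryagin dual to $H^0(\Gal_{F_v},(E/\cO)(\chi_2\chi_1^{-1}))$.
\begin{itemize}
\item If $\chibar_1\neq\chibar_2$, this vanishes for any unramified twists.
\item If $\chibar_1=\chibar_2$, the residual ratio is exactly cyclotomic; this is the situation of your ``exceptional'' components, whose generic points are tr\`es ramifi\'ee when $F_v=\Qp$. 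Choose generic twists, so that $\chi_1\chi_2^{-1}=\mathrm{ur}_\gamma$ with $\gamma\equiv 1$ and $\gamma\neq 1$. Then $H^2\cong\cO/(\gamma-1)$ has $\varpi$-torsion, and the reduction map on $H^1$ has image a proper subspace. Indeed such lifts are crystalline, so only peu ramifi\'ee classes are reached.
\end{itemize}
The missing step is the non-generic choice $\chi_1=\chi_2=\widetilde{\chibar_1}$, which is also compatible with determinant $\psi_1$. Then $H^2(\Gal_{F_v},\cO(\varepsilon))\cong\cO$ is torsion-free, so every class in $H^1(\Gal_{F_v},\F(\varepsilonbar))$ lifts. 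Every extension of $\chi_1\varepsilon^{-1}$ by $\chi_1$ is semistable over $K_v$ by Kummer theory, since $\chi_1$ is trivial on $I_{K_v}$. This is the Galois cohomology calculation the paper invokes. It handles every reducible $\rbar$ uniformly and for all $p$, including when $\varepsilonbar$ is trivial.

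Once this is in place, you need neither the Serre-weight description of the components nor any dimension matching. Rationally $H^1_g=H^1$ for the character $\chi_1\chi_2^{-1}\varepsilon$, so the only issue is integral surjectivity, and density of reducible points in $\cX^{\psi_1}_{\red}$ does the rest.

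Two smaller remarks:
\begin{itemize}
\item Your parenthetical ``semistable if $\widetilde\chi_2\widetilde\chi_1^{-1}=\varepsilon^{-1}$'', with $\widetilde\chi_2$ the sub, has the orientation backwards, since $H^1_g(\Gal_{F_v},E(\varepsilon^{-1}))=0$. The factor $\varepsilon^{-1}$ must go on the quotient; this is always possible, so the slip is not fatal.
\item Your Breuil--M\'ezard alternative is the shortcut the paper itself mentions for $p>2$. Principal series types do suffice there, because every Serre weight has nonzero invariants under the unipotent radical and so is a quotient of a principal series. But this route relies on Breuil--M\'ezard and gives nothing for $p=2$, which the statement must cover.
\end{itemize}
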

\begin{proof}
Since $\cX^{[0,1]_v,K_v,\psi_1}/\varpi$ and~$\cX^{\psi_1}_{\red}$ are of finite type over~$\F$, it suffices to show that for each finite extension~$\F'/\F$, each $\rbar:\Gal_{F_v}\to\GL_2 (\F')$ of determinant~$\psibar_1$ admits a lift to a weight~$2$ representation which becomes semistable over~$K_v$.
  At least when $p>2$, this is an immediate consequence of the Breuil--M\'ezard conjecture; see e.g.\ \cite[\S 8.6]{emertongeepicture}.  We can also prove it (for all~$p$) more directly as follows.
  Since both substacks are closed in~$\cX^{\psi_1}/\varpi$, it suffices to show that $\cX^{[0,1]_v,K_v,\psi_1}/\varpi$ contains a dense set of closed points of~$\cX^{\psi_1}_{\red}$.  Any sufficiently generic closed point of~$\cX^{\psi_1}_{\red}$ corresponds to a reducible~$\rbar$.  It therefore suffices to show that if $\rbar\cong
  \begin{pmatrix}
    \chibar_1 & *\\ 0&\chibar_2\varepsilonbar^{-1} 
  \end{pmatrix}
  $ then~$\rbar$ admits a lift of the form $\begin{pmatrix}
    \chi_1 & *\\ 0&\chi_2\varepsilon^{-1} 
  \end{pmatrix}$ where~$\chi_i$ is an unramified twist of the Teichm\"uller lift~$\widetilde{\chibar_i}$ of~$\chibar_i$, and $\chi_1 \chi_2 =\widetilde{\chibar_1\chibar_2}$ (such a lift is automatically semistable over~$K_{v}$).
  This follows from a straightforward calculation in Galois cohomology, cf.\ \cite[Lem.\ 6.1.6]{MR2931274}.
\end{proof}
\begin{rem}\label{rem:local-n-dimensional}
  In fact, by a straightforward induction on~$n$ one can prove an analogue of Proposition~\ref{prop:underlying-reduced-tame-weight-0} for~$\GL_n$, replacing~$[0,1]_{v}$ with any regular Hodge type for~$\GL_n$. 
\end{rem}

\subsection{Versal rings}\label{subsec:versal-rings-to-stack}
Let~$\psi$ be a de Rham lift of~$\det\rhobar$.
Then~$\rhobar|_{\Gal_{F_v}}$ corresponds to a point $x \in \cX^{\psi}(\F)$, and there is an induced morphism $\Spf R_{v}^{\psi} \to \cX^{\psi}$ which is versal to~$\cX^{\psi}$ at~$x$ (in a precise sense which we do not need to recall here; see~\cite[Prop.\ 3.6.3]{emertongeepicture}).
For each pair~$\lambda_v,E_v$, this morphism pulls back to a versal morphism $\Spf R_v^{\lambda_v,E_v,\psi} \to \cX^{\lambda_v,E_v,\psi}$.
In particular, the closed substack $\cX^{\lambda_v,E_v,\psi}_{\red}$ pulls back under this morphism to a closed formal subscheme of~$\Spf R_v^{\lambda_v,E_v,\psi}/\varpi$.
Since for any complete Noetherian local ring~$R$, the closed formal subschemes of $\Spf R$ are in bijection (via passing to the $\m_R$-adic completion) with the closed subschemes of~$\Spec R$, this pullback corresponds to a quotient~$R_v^{\lambda_v,E_v,\psi}/(\varpi,I)$ of~$R_v^{\lambda_v,E_v,\psi}/\varpi$ by an ideal~$I$.
Since $\cX^{\lambda_v,E_v,\psi}/\varpi$ is of finite type over~$\F$, the ideal~$I$ is nilpotent.

In the statement of the following corollary, note that if~$\psibar=\psibar_1=\det\rhobar$, we may canonically identify~$R_v^{\psi}/\varpi$ with $R_v^{\psi_1}/\varpi$.
\begin{cor}
  \label{cor:nilpotent-thickening-comparison-local-deformation-rings}
For each~$\lambda_{v},E_v,\psi$ with~$\psibar=\det(\rhobar|_{\Gal_{F_v}})$, there is a nilpotent ideal~$I$ in $R_v^{\lambda_v,E_v,\psi}/\varpi$ with the property that the natural surjection $R_v^{\psi}\onto R_v^{\lambda_v,E_v,\psi}/(\varpi,I)$ factors through~$R_v^{[0,1]_v,K_v,\psi_1}/\varpi$.
\end{cor}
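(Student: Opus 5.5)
We take $I$ to be the ideal constructed in \S\ref{subsec:versal-rings-to-stack}: the ideal of $R_v^{\lambda_v,E_v,\psi}/\varpi$ cutting out the pullback of $\cX^{\lambda_v,E_v,\psi}_{\red}$ under the versal morphism $\Spf R_v^{\lambda_v,E_v,\psi}\to\cX^{\lambda_v,E_v,\psi}$. It was noted there that $I$ is nilpotent, so it remains to prove the factorization. Everything can be done in the special fibre. Since $\psibar=\psibar_1$, the stacks $\cX^{\psi}/\varpi$ and $\cX^{\psi_1}/\varpi$ are canonically identified, and so are $R_v^{\psi}/\varpi$ and $R_v^{\psi_1}/\varpi$. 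Moreover, the versal morphisms $\Spf R_v^{\psi}/\varpi\to\cX^{\psi}/\varpi$ and $\Spf R_v^{\psi_1}/\varpi\to\cX^{\psi_1}/\varpi$ at the point $x$ corresponding to $\rhobar|_{\Gal_{F_v}}$ agree under these identifications. Both morphisms are induced by the same universal lift of $\rhobar|_{\Gal_{F_v}}$ reduced mod $\varpi$, so this compatibility is a formal check.

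With this identification, we argue at the level of closed substacks of $\cX^{\psi}/\varpi$. The reduced substack $\cX^{\lambda_v,E_v,\psi}_{\red}$ is a closed substack of $\cX^{\psi}_{\red}=\cX^{\psi_1}_{\red}$; the latter depends only on the special fibre and so is insensitive to the choice between $\psi$ and $\psi_1$. By Proposition~\ref{prop:underlying-reduced-tame-weight-0}, $\cX^{\psi_1}_{\red}$ is contained in the closed substack $\cX^{[0,1]_v,K_v,\psi_1}/\varpi$. Hence we get a chain of closed immersions
\[
\cX^{\lambda_v,E_v,\psi}_{\red}\into\cX^{\psi_1}_{\red}\into\cX^{[0,1]_v,K_v,\psi_1}/\varpi\into\cX^{\psi_1}/\varpi=\cX^{\psi}/\varpi .
\]

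We now pull this chain back along the versal morphism $\Spf R_v^{\psi}/\varpi\to\cX^{\psi}/\varpi$. Pulling back commutes with closed immersions. The pullback of $\cX^{\lambda_v,E_v,\psi}/\varpi$ is $\Spf R_v^{\lambda_v,E_v,\psi}/\varpi$, and that of $\cX^{[0,1]_v,K_v,\psi_1}/\varpi$ is $\Spf R_v^{[0,1]_v,K_v,\psi_1}/\varpi$; both facts were stated in \S\ref{subsec:versal-rings-to-stack}, applied to $\psi$ and $\psi_1$ respectively. The pullback of $\cX^{\lambda_v,E_v,\psi}_{\red}$ is the closed formal subscheme attached to $R_v^{\lambda_v,E_v,\psi}/(\varpi,I)$. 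We therefore obtain inclusions of closed formal subschemes of $\Spf R_v^{\psi}/\varpi$. Using the bijection between closed formal subschemes of $\Spf R$ and closed subschemes of $\Spec R$ for complete Noetherian local $R$, these become inclusions of ideals in $R_v^{\psi}/\varpi$: the kernel of $R_v^{\psi}/\varpi\to R_v^{[0,1]_v,K_v,\psi_1}/\varpi$ is contained in the kernel of $R_v^{\psi}/\varpi\onto R_v^{\lambda_v,E_v,\psi}/(\varpi,I)$. This is exactly the required factorization.

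The main point needing care is the compatibility in the first paragraph. We must check that the identification $R_v^{\psi}/\varpi\cong R_v^{\psi_1}/\varpi$ matches the two versal maps. We must also check that the pullback of the weight~$2$ substack along the $\psi$-versal map really is $R_v^{[0,1]_v,K_v,\psi_1}/\varpi$. Here we use the fact that the special fibre of the pulled-back $\cO$-flat quotient is the pullback of the special fibre of the stack. This relies on $\cX^{[0,1]_v,K_v,\psi_1}$ being flat, defined as a closed substack whose versal ring is the $\cO$-flat deformation ring, as recalled before the corollary. Once this is granted, the rest is formal.
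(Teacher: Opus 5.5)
Your proposal is correct and is exactly the paper's argument, which the paper compresses to one line: take $I$ as in \S\ref{subsec:versal-rings-to-stack}, use Proposition~\ref{prop:underlying-reduced-tame-weight-0} to get $\cX^{\lambda_v,E_v,\psi}_{\red}\subseteq\cX^{\psi_1}_{\red}\subseteq\cX^{[0,1]_v,K_v,\psi_1}/\varpi$, and pull back along the versal map to get the inclusion of ideals in $R_v^{\psi}/\varpi$. One small remark: the compatibility you flag at the end is just the base change to $\Spec\F$ of the fibre product $\cX^{[0,1]_v,K_v,\psi_1}\times_{\cX^{\psi_1}}\Spf R_v^{\psi_1}=\Spf R_v^{[0,1]_v,K_v,\psi_1}$, so it needs no flatness argument beyond that identification, which the paper states.
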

\begin{proof}
Letting~$I$ be as defined above, this is immediate from Proposition~\ref{prop:underlying-reduced-tame-weight-0}.
  \end{proof}
\section{Finiteness of global deformation rings}
\label{sec:finit-global-deform-rings}

\subsection{Finiteness of weight two deformation rings}
\label{sec:finit-potent-bars}
\begin{defn}\label{defn:exceptional}
  Following~\cite{MR3648503}, we say that $\rhobar$ is \emph{exceptional} if~$p=5$, the
  projective image~$\Proj(\rhobar)$ of~$\rhobar$ is isomorphic to~$\PGL_2(\F_5)$, and
  $\overline{F}^{\ker\Proj(\rhobar)}$ contains~$\zeta_5$.
\end{defn}

\begin{prop}
  \label{prop:pot-BT-deformation-ring-finite}
Suppose that~
$\rhobar$ is totally odd, and that $\rhobar|_{\Gal_{F(\zeta_{p})}}$ is absolutely irreducible.
If~$p=2$, assume that~$\rhobar$ has non-solvable image, and if ~$p=5$, assume  that~$\rhobar$ is not exceptional in the sense of Definition~\emph{\ref{defn:exceptional}}.

  Assume furthermore that~$\lambda_v$ has weight~$2$ for each~$v|p$.
    Then $R_{\cS}$ is a finite $\cO$-algebra.
  \end{prop}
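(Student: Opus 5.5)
We deduce this from Snowden's theorem \cite{0905.4266}, which builds on Kisin's modularity lifting theorems. The argument is the usual potential-modularity reduction: finiteness of $R_{\cS}$ is insensitive to solvable base change, and after such a base change the relevant deformation ring is bounded by a Hecke algebra.

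\textbf{Step 1: enlarge the local conditions.} First replace each $\Rbar_v^{\psi}$ by the full ring $R_v^{\psi}$ for $v\nmid p$, and by $R_v^{[0,1]_v,E_v,\psi}$ for $v|p$. This only enlarges $R_{\cS}$, since $R_{\cS}$ is a quotient of the resulting ring by \eqref{eq:global-over-local}. It is therefore enough to prove finiteness of the deformation ring of potentially Barsotti--Tate, fixed-determinant deformations unramified outside~$\Sigma$.

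\textbf{Step 2: pass to a solvable extension.} Choose a finite solvable totally real extension $F'/F$ with the following properties:
\begin{itemize}
\item $F'$ is linearly disjoint from $\overline{F}^{\ker\rhobar}(\zeta_p)$, so that $\rhobar|_{\Gal_{F'(\zeta_p)}}$ stays absolutely irreducible and, when $p=5$, non-exceptionality is preserved;
\item every $\Gal_{F'_w}$ acts trivially on $\rhobar$ at the places $w$ above $\Sigma$;
\item the potentially semistable conditions become semistable (Barsotti--Tate) over $F'_w$;
\item $\rhobar|_{\Gal_{F'}}$ is modular, by the potential modularity theorems of Taylor (or Snowden's version).
\end{itemize}
Restriction to $\Gal_{F'}$ gives a map $R_{\cS'}\to R_{\cS}$, where $\cS'$ is the corresponding problem over $F'$. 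This map is finite by the standard argument: $R_{\cS}$ is topologically generated by traces, and a finiteness lemma (e.g.\ Khare--Wintenberger, or \cite[Lem.~3.2.5]{0905.4266}) shows that $R_{\cS}$ is finite over the image of $R_{\cS'}$.

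\textbf{Step 3: finiteness over $F'$.} Over $F'$, the ring $R_{\cS'}$, with Barsotti--Tate conditions at $p$ and arbitrary conditions away from $p$, is finite over $\cO$. This holds because the $R=\mathbb{T}$ (or $R^{\red}=\mathbb{T}$) theorem of Kisin and Snowden, proved via Taylor--Wiles patching with unrestricted components, shows that $R_{\cS'}$ is finite over a Hecke algebra of some quaternionic Shimura set or curve, and that Hecke algebra is finite over $\cO$.

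The Taylor--Wiles hypotheses are exactly the assumptions of the proposition:
\begin{itemize}
\item oddness;
\item absolute irreducibility over $F(\zeta_p)$, the standard Taylor--Wiles condition;
\item non-solvable image for $p=2$;
\item the non-exceptional condition at $p=5$, needed for Taylor--Wiles primes via \cite{MR3648503}.
\end{itemize}
Indeed, for $p>2$ this is essentially \cite[Thm.~6.1.7/Prop.~6.2.x]{0905.4266}, and for $p=2$ one uses Kisin's $2$-adic Barsotti--Tate work.

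\textbf{Main obstacle.} The difficulty is the bookkeeping in Step 2 when $p=2$ or $p=5$. One must check that the chosen $F'$ preserves the large-image hypotheses and that the cited modularity lifting theorem applies to all components simultaneously, rather than only to a fixed component. In practice I would simply cite Snowden's finiteness statement and check that its hypotheses match ours.
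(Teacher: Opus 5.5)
Your top-level plan (deduce the statement from Snowden~\cite{0905.4266} for $p>2$ and from Kisin's $2$-adic work~\cite{KisinTwoAdic} for $p=2$) is the paper's. For $p>2$, your fallback of simply citing Snowden's finiteness theorem is essentially the paper's proof. The paper first reduces to the case that each $\Rbar_v^{\psi}[1/p]$ is irreducible, then applies \cite[Thm.~6.1.1]{0905.4266}, bearing in mind \cite[Prop.~7.3.1, 7.4.1]{0905.4266}. When you check hypotheses you will also find that Snowden's condition (A2) at $p=5$ is stronger than non-exceptionality. This is harmless only because the published version of~\cite{KisinModularity} needs just non-exceptionality.

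However, the argument you actually sketch in Steps~2--3 has a gap, and for $p=2$ there is no ready-made citation to fall back on. A minor issue first: you ask for a \emph{solvable} $F'/F$ over which $\rhobar$ becomes modular, and potential modularity does not provide that (otherwise solvable descent would prove Serre's conjecture over $F$). Solvability is irrelevant here, though: $R_{\cS}$ is finite over $R_{\cS'}$ for any finite $F'/F$ with $\rhobar|_{\Gal_{F'}}$ absolutely irreducible, so simply drop it.

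The substantive gap is Step~3. Patching does not work ``with unrestricted components''. It shows that the support of the patched module $M_\infty$ is a union of irreducible components of $\Spec R_\infty$, and it controls $R_{\cS'}$ only over those components. To know that a given irreducible choice of local components lies in that support, you need a modular lift of $\rhobar|_{\Gal_{F'}}$ lying on it. So you must do two things:
\begin{enumerate}
\item[(a)] reduce to a single irreducible component at each place, so that $F'$ may depend on that choice; this is the paper's first step, whereas your Step~1 goes the other way;
\item[(b)] produce, over a suitable $F'$, a modular lift lying on the corresponding components. At $v|p$ this means prescribing ordinary versus non-ordinary behaviour.
\end{enumerate}
Knowing merely that $\rhobar|_{\Gal_{F'}}$ is modular is not enough; this is exactly the ``all components'' obstacle you flag and then leave open.

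For $p>2$ this input is the potential modularity result \cite[Thm.~5.1.1]{0905.4266}, which feeds into \cite[Thm.~6.1.1]{0905.4266}. For $p=2$, Snowden's paper does not apply, and \cite{KisinTwoAdic} only replaces the lifting theorem of~\cite{KisinModularity}. The paper supplies the $2$-adic analogue of \cite[Thm.~5.1.1]{0905.4266} by running the proof of \cite[Prop.~6.7]{MR3598803}. There one chooses, for each $v|2$, whether the auxiliary $N$-HBAV $A/L_v$ has good ordinary or good non-ordinary reduction. Without this, your argument does not cover $p=2$.
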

\begin{proof} Since each $\Rbar_v^{\psi}[1/p]$ has only finitely many irreducible components, we may assume that each~$\Rbar_v^{\psi}[1/p]$ is irreducible.
  If~$p>2$, then bearing in mind~\cite[Prop.\ 7.3.1, 7.4.1]{0905.4266}, the result is then a special case of 
  \cite[Thm.\ 6.1.1]{0905.4266}. (Note that Snowden's
  condition~(A2) is slightly stronger than our assumption
  that~$\rhobar$ is not exceptional if~$p=5$, but this is due to the
  corresponding condition in the version of~\cite{KisinModularity} that is
  cited in~\cite{0905.4266}; the published version
  of~\cite{KisinModularity} assumes only that~$\rhobar$ is not exceptional.) If~$p=2$ then a very similar argument works, appealing to~\cite{KisinTwoAdic} instead of~\cite{KisinModularity}; the analogue for $p=2$ of \cite[Thm.\ 5.1.1]{0905.4266} can be proved in exactly the same way as \cite[Prop.\ 6.7]{MR3598803} (noting that in the proof there, for each place $v|2$, one can choose whether to make the $N$-HBAV $A/L_v$ have  good ordinary reduction or good non-ordinary reduction).
  \end{proof}

\subsection{The main theorem}
\label{sec:main-theorem}
\begin{thm}
  \label{thm:main-thm-existence-lifts}
Suppose that~$\rhobar$ is totally odd and that $\rhobar|_{\Gal_{F(\zeta_{p})}}$ is absolutely irreducible. If~$p=2$, assume that~$\rhobar$ has non-solvable image, and if ~$p=5$, assume  that~$\rhobar$ is not exceptional in the sense of Definition~\emph{\ref{defn:exceptional}}.

Then for any global deformation problem~$\cS$, the universal deformation ring~$R_{\cS}$ is a finite~$\cO$-algebra of positive $\cO$-rank.
  In particular, the set of isomorphism classes of lifts of~$\rhobar$ of type~$\cS$ is finite and non-empty.
\end{thm}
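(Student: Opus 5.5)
The plan is to reduce finiteness of $R_{\cS}$ to Proposition~\ref{prop:pot-BT-deformation-ring-finite}, by comparing special fibres of local deformation rings via Corollary~\ref{cor:nilpotent-thickening-comparison-local-deformation-rings}, and then combine finiteness with the dimension bound of Proposition~\ref{prop:dim-lower-bound}.

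First I construct an auxiliary weight~$2$ problem. Let $\cS'=\bigl(\psi_1,\Sigma',\{\Rbar'^{\psi_1}_v\}\bigr)$, where $\psi_1$ is as in Definition~\ref{defn:pot-weight-2-notation} (globally), and $\Sigma'=\Sigma$ enlarged by any places where $\psi_1$ ramifies. Note $\psi_1\varepsilon$ is a finite-order character, so it is unramified outside $p$ and the places where $\det\rhobar$ ramifies, all already in $\Sigma$; hence $\Sigma'=\Sigma$. Since $\psibar_1=\psibar=\det\rhobar$, we have $R_v^{\psi}/\varpi=R_v^{\psi_1}/\varpi$ for every $v$, and $R_\Sigma^{\univ,\psi}/\varpi=R_\Sigma^{\univ,\psi_1}/\varpi$. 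For $v|p$ take $\Rbar'^{\psi_1}_v=R_v^{[0,1]_v,K_v,\psi_1}$, which is nonzero by Proposition~\ref{prop:underlying-reduced-tame-weight-0}. For $v\nmid p$ take $\Rbar'^{\psi_1}_v$ to be the $\cO$-flat quotient of $R_v^{\psi_1}$, i.e.\ all components. The one point to check here is that $R_v^{\psi}/\varpi$ is a quotient of this flat quotient modulo~$\varpi$, up to nilpotents. I would handle this by replacing $\Rbar_v^{\psi}$ and $\Rbar'^{\psi_1}_v$ by the full $R_v^{\psi}$, $R_v^{\psi_1}$, using the known result that $R_v^{\psi}$ is $\cO$-flat (a complete intersection) for $v\nmid p$. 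Alternatively I would note that Snowden's finiteness result applies with the full local lifting ring at $v\nmid p$, since it has finitely many components.

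Next I assemble the comparison. By Corollary~\ref{cor:nilpotent-thickening-comparison-local-deformation-rings}, for each $v|p$ there is a nilpotent $I_v$ such that $R_v^\psi\onto \Rbar_v^{\psi}/(\varpi,I_v)$ factors through $R_v^{[0,1]_v,K_v,\psi_1}/\varpi$. Here I use that $\Rbar_v^\psi$ is a quotient of $R_v^{\lambda_v,E_v,\psi}$. For $v\nmid p$, $\Rbar_v^{\psi}/\varpi$ is a quotient of $R_v^{\psi_1}/\varpi$. Taking completed tensor products gives a surjection $R^{\loc}_{\cS'}/\varpi\onto R^{\loc}_{\cS}/(\varpi,I)$, where $I$ is generated by the images of the $I_v$ and is nilpotent, being finitely generated by nilpotents. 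Base changing along $R^{\loc}\to R_\Sigma^{\univ,\psi}/\varpi$ and using \eqref{eq:global-over-local} gives a surjection $R_{\cS'}/\varpi\onto R_{\cS}/(\varpi,I R_{\cS})$.

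Finiteness and conclusion then follow. By Proposition~\ref{prop:pot-BT-deformation-ring-finite}, $R_{\cS'}$ is finite over $\cO$, so $R_{\cS}/(\varpi,IR_\cS)$ is finite over $\F$. Since $IR_\cS$ is nilpotent, $R_\cS/\varpi$ has a finite filtration with graded pieces finitely generated over $R_\cS/(\varpi,I)$, hence is finite-dimensional over $\F$. By complete Nakayama, $R_\cS$ is finite over $\cO$. Now $\dim R_\cS\ge1$ by Proposition~\ref{prop:dim-lower-bound}, and a finite $\cO$-algebra that is also $\varpi$-torsion has dimension $0$, so $R_\cS[1/p]\ne0$, i.e.\ $R_\cS$ has positive $\cO$-rank. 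Then $\Hom_{\cO}(R_\cS,\Qpbar)$ is finite and nonempty, and by Definition~\ref{defn:lift-of-type-S} this gives the final assertion.

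I expect the main obstacle to be the bookkeeping at places $v\nmid p$ and the change of determinant from $\psi$ to $\psi_1$. The nilpotent-thickening comparison itself is already supplied by Corollary~\ref{cor:nilpotent-thickening-comparison-local-deformation-rings}.
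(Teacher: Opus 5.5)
Your proposal is correct and follows essentially the same route as the paper: the same auxiliary weight~$2$ problem $\cS'$ (with $\psi_1$, the full $R_v^{\psi_1}$ at $v\nmid p$ and $R_v^{[0,1]_v,K_v,\psi_1}$ at $v|p$), local surjections modulo~$\varpi$ up to a nilpotent ideal via Corollary~\ref{cor:nilpotent-thickening-comparison-local-deformation-rings}, the global-over-local identity~\eqref{eq:global-over-local}, Snowden's finiteness, and the dimension bound. Your extra checks are points the paper leaves implicit: that $\Sigma$ need not be enlarged, that $R_v^{[0,1]_v,K_v,\psi_1}\neq 0$, and that taking the full $R_v^{\psi_1}$ at $v\nmid p$ relies on its $\cO$-flatness; note also that you never need to replace the given $\Rbar_v^{\psi}$, since $\Rbar_v^{\psi}/\varpi$ is already a quotient of $R_v^{\psi_1}/\varpi$, as your assembly step in fact uses.
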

\begin{proof}
  In order to see that~$R_{\cS}$ is a finite~$\cO$-algebra, it suffices to show that $R_{\cS}/\varpi$ is a finite $\F$-algebra.
    We now consider the deformation problem \[\cS'=\bigl(\psi_1,\Sigma,\{\Rbar_v^{\psi_1}\}_{v\in \Sigma}\bigr),\] where
  \begin{itemize}[wide]
  \item $\psi_1$ is as in Definition~\ref{defn:pot-weight-2-notation}.
  \item $\Rbar_{v}^{\psi_1}=R_{v}^{\psi_1}$ for $v\nmid p$.
  \item $\Rbar_v^{\psi_1}=R_v^{[0,1]_v,K_v,\psi_1}$ if~$v|p$, where~$K_v$ is as in Definition~\ref{defn:pot-weight-2-notation}.
  \end{itemize}
  For~$v\nmid p$, the canonical identification $R_v^{\psi_1}/\varpi=R_v^{\psi}/\varpi$ and the quotient map $R_v^{\psi}/\varpi\onto\Rbar_v^{\psi}/\varpi$, together with Corollary~\ref{cor:nilpotent-thickening-comparison-local-deformation-rings} for~$v|p$, give a nilpotent ideal $I^{\loc}$ of $R_{\cS}^{\loc}/\varpi$ and a surjection of $R^{\loc}$-algebras $R^{\loc}_{\cS'}/\varpi\onto R_{\cS}^{\loc}/(\varpi,I^{\loc})$.
  Let~$I$ be the ideal of~$R_{\cS}/\varpi$ generated by the image of~$I^{\loc}$ under the map $R_{\cS}^{\loc}/\varpi\to R_{\cS}/\varpi$.
  By~\eqref{eq:global-over-local}, this induces a surjection of $\F$-algebras $R_{\cS'}/\varpi\onto R_{\cS}/(\varpi,I)$.
    By Proposition~\ref{prop:pot-BT-deformation-ring-finite}, $R_{\cS'}/\varpi$ is a finite $\F$-algebra, so the same is true of $R_{\cS}/(\varpi,I)$.
  Since $R_{\cS}$ is Noetherian and~$I$ is nilpotent, we deduce that $R_{\cS}/\varpi$ is a finite $\F$-algebra, as claimed.

  By Proposition~\ref{prop:dim-lower-bound}, $R_{\cS}$ has dimension at least~$1$, so it has dimension precisely~$1$, and has positive $\cO$-rank.
  Thus $R_{\cS}[1/p]\otimes_E\Qpbar$ is a non-zero finite $\Qpbar$-algebra, so the set of $\cO$-homomorphisms $R_{\cS}\to\Qpbar$ is finite and non-empty, and we are done (see Definition~\ref{defn:lift-of-type-S}).
  \end{proof}

\printbibliography

\end{document}